\newtheorem{theorem}{Theorem}[section]
\newtheorem{proposition}[theorem]{Proposition}
\theoremstyle{definition}
\newtheorem{definition}[theorem]{Definition}
\newtheorem{example}[theorem]{Example}
\theoremstyle{remark}
\newtheorem{remark}[theorem]{Remark}
\numberwithin{equation}{section}
\newcommand{\tr}{{\rm Tr\hskip -0.2em}~}
\newcommand{\mean}{{\rm E}}
\begin{document}

\setcounter{page}{1}

\title[Expectation of means]{An Inequality for Expectation of Means of Positive Random Variables}

\author[P. Gibilisco \MakeLowercase{and} F. Hansen]{Paolo Gibilisco$^1$$^{*}$ \MakeLowercase{and} Frank Hansen$^2$}

\address{$^{1}$Department of Economics and Finance, University of Rome ``Tor Vergata", Via Columbia 2, Rome 00133, Italy.}
\email{\textcolor[rgb]{0.00,0.00,0.84}{gibilisco@volterra.uniroma2.it}}

\address{$^{2}$Institute for Excellence in Higher Education, Tohoku University, Sendai, Japan.
}
\email{\textcolor[rgb]{0.00,0.00,0.84}{frank.hansen@m.tohoku.ac.jp}}



\subjclass[2010]{Primary 26E60; Secondary 47A64, 60B20.}

\keywords{Numerical means, operator means, concavity, random matrices.}


\begin{abstract}
Suppose that $X,Y$ are positive random variable and $m$ a numerical (commutative) mean. We prove that the inequality $\mean (m(X,Y)) \leq m(\mean (X), \mean (Y))$ holds if and only if the mean is generated by a concave function. With due changes we also prove that the same inequality holds for all operator means in the Kubo-Ando setting. The case of the harmonic mean was proved  by C.R. Rao and B.L.S. Prakasa Rao.

\end{abstract} \maketitle

\section{Introduction and preliminaries}

Let $x,y$ be positive real numbers. The arithmetic, geometric, harmonic, and logarithmic means are defined by
\[
\begin{array}{rlrl}
m_a(x,y)&=\displaystyle\frac{x+y}{2} &m_g(x,y)&=\sqrt{xy}\\[2ex]
m_h(x,y)&=\displaystyle\frac{2}{x^{-1}+y^{-1}} \qquad&m_l(x,y)&=\displaystyle\frac{x-y}{\log x - \log y} \,.
\end{array}
\]
Suppose $ X,Y\colon\Omega \to (0,+\infty)$ are positive random variables. Linearity of the expectation operator trivially implies
\[
\mean(m_a(X,Y))=m_a(\mean(X),\mean (Y)).
\] 
On the other hand the Cauchy-Schwartz inequality implies
\[
\mean(m_g(X,Y)) \leq m_g(\mean(X),\mean(Y)).
\] 
Working on a result by Fisher on ancillary statistics
Rao \cite{Rao:1992, Rao:1996}  obtained the following proposition by an application of H\"older's inequality together with the harmonic-geometric mean inequality.

\begin{proposition}
\begin{equation} \label{mainharmonic}
\mean (m_h(X,Y)) \leq m_h(\mean (X), \mean (Y)). 
\end{equation}
\end{proposition}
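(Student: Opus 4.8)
The plan is to bypass the H\"older-inequality argument of Rao and instead exploit the fact that the harmonic mean is a concave function of its two arguments, which reduces the statement to Jensen's inequality — indeed, to nothing more than linearity of expectation. Writing $m_h(x,y)=2xy/(x+y)$, the cleanest route is through the elementary variational identity
\[
\frac{xy}{x+y}=\min_{\substack{u,v>0\\ u+v=1}}\bigl(u^2 x + v^2 y\bigr),
\]
whose minimum is attained at $u=y/(x+y)$, $v=x/(x+y)$ (a one-line calculus check). Consequently $m_h(x,y)=\min_{u+v=1}2(u^2 x + v^2 y)$ is an infimum of functions that are \emph{linear} in $(x,y)$, and in particular $m_h$ is jointly concave on $(0,+\infty)^2$.

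The inequality then follows directly. For each fixed admissible pair $(u,v)$ the pointwise bound $m_h(X,Y)\leq 2(u^2 X + v^2 Y)$ holds on $\Omega$; taking expectations and using linearity gives
\[
\mean\bigl(m_h(X,Y)\bigr)\leq 2\bigl(u^2\,\mean(X)+v^2\,\mean(Y)\bigr).
\]
Since this holds for every $(u,v)$ with $u+v=1$, I would minimize the right-hand side over such pairs; the minimum equals $m_h(\mean(X),\mean(Y))$ by the same variational identity applied to the numbers $\mean(X),\mean(Y)$, yielding $\mean(m_h(X,Y))\leq m_h(\mean(X),\mean(Y))$.

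Equivalently, one may observe that $m_h$ is the perspective of the concave one-variable generator $f(t)=m_h(t,1)=2t/(t+1)$, and invoke the general fact that the perspective of a concave function is jointly concave; Jensen's inequality applied to the random vector $(X,Y)$ then gives the claim at once. This second formulation is the one I expect to generalize to an arbitrary mean generated by a concave function, which is presumably the content of the main theorem.

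The algebra here is routine, so the only genuine points requiring care are analytic: I must ensure that the expectations are well-defined. Assuming $\mean(X),\mean(Y)<\infty$, integrability of $m_h(X,Y)$ is automatic from the elementary bound $m_h(X,Y)\leq m_a(X,Y)=(X+Y)/2$, so the left-hand side is finite and every manipulation is justified. The main obstacle I anticipate is therefore not in the harmonic case at all, but in the converse direction and the operator version announced in the abstract, where joint concavity must be replaced by operator concavity and Jensen's inequality by its matrix analogue.
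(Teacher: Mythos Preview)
Your argument is correct. The variational identity $xy/(x+y)=\min_{u+v=1}(u^2x+v^2y)$ is valid (the minimiser is indeed $u=y/(x+y)$), and from it the pointwise bound, linearity of expectation, and minimisation over $(u,v)$ give the inequality immediately. Your integrability remark via $m_h\le m_a$ is also the right safeguard.

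As for comparison with the paper: the proposition is attributed there to Rao, whose proof goes through H\"older's inequality combined with the harmonic--geometric mean inequality, so your approach is genuinely different from the proof the paper cites. Your \emph{second} formulation, however --- that $m_h$ is the perspective of the concave generator $f(t)=2t/(t+1)$ and hence jointly concave, so Jensen applies to the random vector $(X,Y)$ --- is exactly the method the paper uses for its general theorem (the perspective of a concave function is concave, then Jensen). Your primary variational proof can be read as a hands-on, self-contained unpacking of that perspective/Jensen argument in the harmonic case: writing a concave function as an infimum of its supporting affine functions and pushing the expectation through each. It buys you an elementary proof with no appeal to the abstract perspective lemma; the paper's route, conversely, is what carries over verbatim to every concave $f\in\mathcal F_{num}$, which is the generality they are after.
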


It is natural to ask about the generality of this result. For example, does it hold also for the logarithmic mean? To properly answer this question it is better to choose one of the many axiomatic approaches to the notion of a mean.  

In Section \ref{commutativeperspective} we recall the notion of {\em perspective} of a function, and in Section \ref{commutativemeans} we recall that a mean of pairs of positive numbers may be represented as the perspective of a certain representing function.  In Section \ref{commutativemain} we prove that inequality (\ref{mainharmonic}) holds for a mean $m_f$ if and only if the representing function $f$ is concave. 

Once this is done it becomes natural to address the analog question in the non-commuta\-ti\-ve setting. A positive answer to the case of the matrix harmonic mean was given by Prakasa Rao in \cite{PrakasaRao:1998} and by C.R. Rao in \cite{Rao:2000}. But also in this case the inequality holds in a much wider generality.
In Section \ref{noncommutativeperspective} we recall the notion of non-commutative perspectives and 
some of their properties, while in
Section \ref{KuboAndomeans} we describe the subclass of Kubo-Ando operator means. In Section \ref{noncommutativemain} we show that inequality (\ref{mainharmonic}) holds true also in the non-commutative case. This follows from the fact that operator means are generated by operator monotone functions; indeed operator monotonicity of a function defined in the positive half-line implies operator concavity \cite[Corollary 2.2]{kn:hansen:2013:1}; rendering the non-commutative setting completely different from the commutative counter part. 

In Section \ref{randommatrix} we consider the random matrix case which, to some extent, encompasses the previous results.

\section{Perspective of a function: commutative case} \label{commutativeperspective}

Let $K \subseteq {\mathbb R}^n$ be a non-empty convex set, and let $g\colon K \to \mathbb R$ be a function. We consider the set
\[
L=\{ (x,t) \mid t>0,\, t^{-1}x \in K     \}.
\]

\begin{definition}
The perspective ${\mathcal P}_g $ of $g$ is the function $\mathcal P_g\colon L \to \mathbb R$ defined by setting
\[
\mathcal P_g(x,t)=t g(t^{-1} x)\qquad (x,t)\in L.
\]
\end{definition}

The following classical result is well-known.

\begin{proposition}\label{jointconvexity}
The perspective $\mathcal P_g $ of a convex function $ g $ is convex.
\end{proposition}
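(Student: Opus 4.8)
The plan is to verify convexity directly from the definition, reducing everything to the convexity of $g$ on $K$. First I would check that the domain $L$ is itself a convex set, since otherwise the statement would not even make sense. Given two points $(x_1,t_1),(x_2,t_2)\in L$ and $\lambda\in[0,1]$, writing $t=\lambda t_1+(1-\lambda)t_2>0$, I would express
\[
t^{-1}\bigl(\lambda x_1+(1-\lambda)x_2\bigr)=\frac{\lambda t_1}{t}\,t_1^{-1}x_1+\frac{(1-\lambda)t_2}{t}\,t_2^{-1}x_2
\]
and observe that the two coefficients are nonnegative and sum to one; since $t_i^{-1}x_i\in K$ and $K$ is convex, the left-hand side lies in $K$, so the convex combination $(\lambda x_1+(1-\lambda)x_2,\,t)$ belongs to $L$.

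The core of the argument is to recognize the correct convex-combination weights. Setting $u_i=t_i^{-1}x_i\in K$ together with $\alpha=\lambda t_1/t$ and $\beta=(1-\lambda)t_2/t$, I would rewrite the argument of $g$ on the left-hand side of the desired inequality as $\alpha u_1+\beta u_2$, a genuine convex combination of points of $K$. Applying convexity of $g$ gives $g(\alpha u_1+\beta u_2)\le \alpha g(u_1)+\beta g(u_2)$, and multiplying through by $t>0$ collapses the factors $t\alpha=\lambda t_1$ and $t\beta=(1-\lambda)t_2$ exactly into
\[
t\,g\bigl(t^{-1}(\lambda x_1+(1-\lambda)x_2)\bigr)\le \lambda\,t_1\, g(u_1)+(1-\lambda)\,t_2\, g(u_2).
\]
Since $t_i\, g(u_i)=t_i\, g(t_i^{-1}x_i)=\mathcal P_g(x_i,t_i)$, the right-hand side is precisely $\lambda\,\mathcal P_g(x_1,t_1)+(1-\lambda)\,\mathcal P_g(x_2,t_2)$, which is the claimed joint convexity.

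The only subtle point — and the step I would be most careful about — is the choice of weights: the natural convex-combination coefficients for the points $u_1,u_2\in K$ are not $\lambda$ and $1-\lambda$, but the rescaled values $\lambda t_1/t$ and $(1-\lambda)t_2/t$, with the scaling factor $t$ reappearing precisely when we multiply the inequality back through. Once this bookkeeping is set up correctly, the result is a one-line consequence of the convexity of $g$, and no smoothness or further regularity of $g$ is required.
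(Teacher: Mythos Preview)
Your argument is correct and is exactly the standard proof of this classical fact: check that $L$ is convex, rewrite the argument of $g$ as a convex combination with the rescaled weights $\lambda t_1/t$ and $(1-\lambda)t_2/t$, apply convexity of $g$, and multiply through by $t$. The paper itself does not supply a proof, stating only that the result is well known, so there is nothing to compare against; your write-up would serve perfectly well as the omitted verification.
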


\begin{example}
Consider the convex funtion
\[
g(x)=x \log x \qquad x>0
\]
with limit $ g(0)=0 $ and set $ K=(0, \infty). $ Then the perspective is the relative entropy
\[
\mathcal P_g(x,t)=x \log x - x \log t
\]
for $ x,t>0. $

\end{example}

Notice that the perspective of a concave function is concave.

\section{Means for positive numbers}\label{commutativemeans}

We use the notation ${\mathbb R}_+= (0,+\infty)$. 

\begin{definition} \label{numbermean}

A bivariate {\sl mean} \cite{PetzTemesi:2005}  is a function $m\colon{\mathbb R}_+ \times {\mathbb R}_+ \to{\mathbb R}_+$  such that

\begin{enumerate}

\item $m(x,x)=x.$ 

\item $m(x,y)=m(y,x).$ 

\item $x <y  $ $\,\Rightarrow\,$ $ x<m(x,y)<y.$ 

\item $x<x' $ and $ y<y' $ $\,\Rightarrow\,$ $ m(x,y)<m(x',y'). $ 

\item $m$ is continuous.

\item $ m $ is positively homogeneous; that is $m(tx,ty)=t \cdot m(x,y)$ for $ t>0. $

\end{enumerate}

\end{definition}

We use the notation $ \mathcal M_{num} $ for the set of bivariate means described above.

\begin{definition}
Let $ \mathcal F_{num} $ denote the class of functions $f\colon\mathbb R_+ \to\mathbb R_+$ such that 

\begin{enumerate}

\item $f$ is continuous.

\item $f$ is monotone increasing.

\item $f(1)=1.$

\item $tf(t^{-1})=f(t)$ for $ t>0. $

\end{enumerate}
\end{definition}

The following result is straightforward.

\begin{proposition}
There is bijection betwen ${\mathcal M}_{num}$ and ${\mathcal F}_{num}$ given by the formulas
\[
m_f(x,y)=yf(y^{-1}x)\qquad\text{and}\qquad f_m(t)=m(1,t)
\]
for positive numbers $ x,y $ and $ t. $
\end{proposition}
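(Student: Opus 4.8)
The plan is to show that the two assignments $f\mapsto m_f$ and $m\mapsto f_m$ are well defined maps between $\mathcal F_{num}$ and $\mathcal M_{num}$, and then to verify by direct computation that they are mutually inverse. I would organise the argument into three stages: (i) $m_f\in\mathcal M_{num}$ for $f\in\mathcal F_{num}$; (ii) $f_m\in\mathcal F_{num}$ for $m\in\mathcal M_{num}$; (iii) $f_{m_f}=f$ and $m_{f_m}=m$.

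For (i), properties (5) and (6) are immediate: continuity of $m_f(x,y)=yf(y^{-1}x)$ is inherited from $f$, and $m_f(tx,ty)=ty\,f\bigl((ty)^{-1}(tx)\bigr)=ty\,f(y^{-1}x)=t\,m_f(x,y)$ gives positive homogeneity. Idempotency (1) follows from $m_f(x,x)=xf(1)=x$. For symmetry (2) I would put $t=y^{-1}x$ in the functional equation $tf(t^{-1})=f(t)$, obtaining $f(y^{-1}x)=(y^{-1}x)f(x^{-1}y)$, whence
\[
m_f(x,y)=yf(y^{-1}x)=xf(x^{-1}y)=m_f(y,x).
\]
Betweenness (3) and monotonicity (4) are then read off the two equivalent expressions $m_f(x,y)=yf(y^{-1}x)=xf(x^{-1}y)$: if $x<y$ then $y^{-1}x<1<x^{-1}y$, and monotonicity of $f$ together with $f(1)=1$ forces $x<m_f(x,y)<y$, while joint monotonicity follows because each expression is increasing in one of the two variables.

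For (ii), normalisation $f_m(1)=m(1,1)=1$ is (1), continuity is (5), and monotonicity of $f_m$ comes from that of $m$. The functional equation is the one identity that genuinely uses homogeneity: by symmetry and then (6),
\[
f_m(t)=m(1,t)=m(t,1)=t\,m(1,t^{-1})=t\,f_m(t^{-1}).
\]
For (iii) I would compute directly. The functional equation gives $f_{m_f}(t)=m_f(1,t)=t f(t^{-1})=f(t)$, and homogeneity followed by symmetry gives $m_{f_m}(x,y)=y\,f_m(y^{-1}x)=y\,m(1,y^{-1}x)=m(y,x)=m(x,y)$, so the two maps are mutual inverses.

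The main obstacle, and the only step that is not a purely formal manipulation, is matching the strict monotonicity and betweenness axioms (3) and (4) with the monotonicity of the representing function. Axiom (4) is phrased with \emph{both} arguments strictly increasing, whereas $f_m$ records the variation of $m$ in a single argument; bridging this gap requires a limiting argument using the continuity property (5) to pass from the jointly strict inequality to variation in one coordinate. Correspondingly, in direction (i) one must check that the monotonicity of $f$ is strong enough, in the presence of $f(1)=1$ and the functional equation, to deliver the strict inequalities demanded by (3) and (4); this is where I would spend most of the care, the remaining verifications being routine.
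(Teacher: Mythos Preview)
The paper gives no proof---it merely labels the result ``straightforward''---so there is nothing to compare against beyond confirming that your outline is the intended routine verification, which it is. Your steps (i)--(iii) are all correct as written.

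The concern you isolate in the final paragraph is genuine and is not merely a matter of extra care: the functional equation together with $f(1)=1$ does \emph{not} by itself upgrade weak monotonicity to strict monotonicity. One can build piecewise-linear $f\in\mathcal F_{num}$ that are constant on an interval inside $(0,1)$ (and correspondingly linear on the reciprocal interval), and for such $f$ the associated $m_f$ fails the strict betweenness axiom~(3). So the bijection as stated requires reading ``monotone increasing'' in the definition of $\mathcal F_{num}$ as strict. Under that reading your argument for (3) and~(4) via the two expressions $yf(y^{-1}x)=xf(x^{-1}y)$ goes through exactly as you describe, and the limiting argument you propose---using continuity~(5) to pass from the jointly strict inequality~(4) to monotonicity in a single variable---is precisely what is needed for~(ii).
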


\subsection{Some examples of means}

The functions in the table below are all concave, even operator concave.

\smallskip 

\begin{table}[ht]
\caption{}\label{eqtable}
\renewcommand\arraystretch{2.5}
\renewcommand{\tabcolsep}{8pt}
\noindent\[
\begin{array}{|c|c|c|}
\hline
{\rm Name} & {\rm function} & {\rm mean}\\
\hline 
{\rm arithmetic} & \displaystyle\frac{1+x}{2} & \displaystyle\frac{x+y}{2}\\
\hline  
{\rm WYD}, \beta\in(0,1)  & \displaystyle\frac{x^{\beta}+x^{1-\beta}}{2} & \displaystyle\frac{x^{\beta}y^{1-\beta}+x^{1-\beta}y^{\beta}}{2} \\
\hline
{\rm geometric} & \sqrt{x} & \sqrt{xy} \\
\hline
{\rm harmonic} & \displaystyle\frac{2x}{x+1} & \displaystyle\frac{2}{x^{-1}+y^{-1}} \\
\hline
{\rm logarithmic} & \displaystyle\frac{x-1}{\log x} & \displaystyle\frac{x-y}{\log x - \log y} \\
\hline
\end{array}
\]
\end{table}

However, there exist non-concave functions in ${\mathcal F}_{num}.$ Consider for example the function
\[
g(x)=\frac{1}{4}\begin{cases} x+3 &  0 \leq x \leq1,\\[0.5ex]
\displaystyle 3x+1 &  x \geq 1.
\end{cases}
\]
This piece-wise affine function is convex and belongs to  $ \mathcal F_{num}. $

\section{The main result: commutative case}\label{commutativemain}

\begin{theorem} Take a function $f \in \mathcal F_{num}.$ 
The inequality
\begin{equation} \label{Conj1}
\mean(m_f(X,Y)) \leq m_f(\mean(X), \mean(Y))
\end{equation}
holds for arbitrary positive random variables $X$ and $Y$  if and only if $ f $ is concave.
\end{theorem}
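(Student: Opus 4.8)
The starting point is the observation that the mean $m_f$ is nothing but the perspective of its representing function: since $\mathcal P_f(x,t)=tf(t^{-1}x)$ and $m_f(x,y)=yf(y^{-1}x)$, we have $m_f(x,y)=\mathcal P_f(x,y)$ on ${\mathbb R}_+\times{\mathbb R}_+$. This identification reduces the whole inequality to a statement about the joint concavity of a perspective, for which the results of Section \ref{commutativeperspective} are tailor-made.

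For the sufficiency (\emph{if}) direction I would argue as follows. Assume $f$ is concave. By the remark at the end of Section \ref{commutativeperspective}, the perspective $\mathcal P_f=m_f$ is then jointly concave on ${\mathbb R}_+\times{\mathbb R}_+$. Viewing $(X,Y)$ as a single ${\mathbb R}^2$-valued random vector and applying the multivariate Jensen inequality to the concave function $\mathcal P_f$ gives $\mean(m_f(X,Y))=\mean(\mathcal P_f(X,Y))\le\mathcal P_f(\mean(X),\mean(Y))=m_f(\mean(X),\mean(Y))$, which is exactly (\ref{Conj1}). The only point needing care is integrability: from axioms (1) and (3) of Definition \ref{numbermean} one has $m_f(X,Y)\le\max(X,Y)\le X+Y$, so $m_f(X,Y)$ is integrable whenever $X,Y$ are, and the application of Jensen is legitimate.

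For the necessity (\emph{only if}) direction the key trick is to collapse one variable. Suppose (\ref{Conj1}) holds for all positive random variables and take $Y\equiv 1$ (a constant, hence $\mean(Y)=1$). Then $m_f(X,1)=f(X)$ and $m_f(\mean(X),1)=f(\mean(X))$, so the inequality degenerates to the scalar Jensen inequality $\mean(f(X))\le f(\mean(X))$, now required to hold for every positive $X$. Feeding in a two-point random variable with $P(X=s)=\lambda$ and $P(X=t)=1-\lambda$ for arbitrary $s,t>0$ and $\lambda\in[0,1]$ yields $\lambda f(s)+(1-\lambda)f(t)\le f(\lambda s+(1-\lambda)t)$, i.e.\ concavity of $f$.

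I expect no serious obstacle here: once $m_f$ is recognized as a perspective, the sufficiency is an immediate consequence of the concavity of perspectives together with Jensen, and the necessity follows cleanly from the $Y\equiv 1$ specialization, which strips the bivariate statement down to ordinary Jensen for $f$. The only genuinely technical point is the integrability bookkeeping in the sufficiency direction, which the elementary estimate $m_f(X,Y)\le X+Y$ disposes of.
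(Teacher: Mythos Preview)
Your proposal is correct and follows essentially the same approach as the paper: both directions hinge on recognizing $m_f$ as the perspective $\mathcal P_f$, invoking the concavity of perspectives from Section~\ref{commutativeperspective}, and then applying Jensen for sufficiency, while the necessity is obtained by the same $Y\equiv 1$ two-point specialization. The only cosmetic difference is that the paper reduces first to simple random variables and passes to the limit, whereas you apply multivariate Jensen directly and supply the integrability bound $m_f(X,Y)\le X+Y$; both routes are standard and equivalent in substance.
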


\begin{proof}
Suppose inequality (\ref{Conj1}) holds for a function $f.$ Take $\Omega=\{1,2\}$ as state space with probabilities $ p $ and $ 1-p, $ and let $Y$ be the constant function $ 1. $ We set  $X(1)=x_1 $ and  $X(2)=x_2 $ for given $ x_1,x_2>0. $ We then have $\mean (Y)=1$ and thus
\[
m_f\bigl(\mean(X), \mean(Y)\bigr)= \mean(Y) f\kern-3pt\left(\frac{\mean(X)}{\mean(Y)}\right)=f(p x_1+(1-p)x_2).
\]
We also have
\[
m_f(X,Y)(1)=Y(1)f\kern-3pt\left(\frac{X(1)}{Y(1)}\right)=f(x_1)
\]
and
\[
m_f(X,Y)(2)=Y(2)f\kern-3pt\left(\frac{X(2)}{Y(2)}\right)=f(x_2).
\]
Therefore
\[
\begin{array}{rl}
p f(x_1)+(1-p)f(x_2)&=\mean\bigl(m_f(X,Y)\bigr)\leq m_f(\mean(X), \mean(Y))\\[2ex]
&= f(p x_1+(1-p) x_2)
\end{array}
\]
implying that $ f $ is concave.

Suppose on the other hand that $f$ is concave and consider two positive random variables $X$ and $Y$. We only have to prove the theorem under the assumption that $X$ and $Y$ are simple random variables (finite linear combinations of indicator functions). The general case then follows since any positive random variable is a pointwise increasing limit of simple random variables.
The (different) values of $ X $ are denoted by $ x_1,\dots,x_n $ with associated (marginal or unconditional) probabilities $ p_1,\dots,p_n. $ The (different) values of $ Y $ are denoted by $ y_1,\dots,y_m $ with associated (marginal or unconditional) probabilities $ q_1,\dots,q_m. $ 

The stochastic variable
$ m_f(X,Y) $ takes the values $ m_f(x_i,y_j) $ with probabilities $ P(X=x_i\,, Y=y_j) $ for $ i=1,\dots,n $ and $ j=1,\dots,m $ (possibly counted with multiplicity).
The mean $m_f$ is the perspective of $f$ and thus concave by Proposition~\ref{jointconvexity}. We may therefore apply Jensen's inequality and obtain
\[
\begin{array}{l}
\mean\bigl(m_f(X,Y)\bigr)=\displaystyle\sum_{i=1}^n\sum_{j=1}^m P(X=x_i\,, Y=y_j) m_f(x_i, y_j)\\[3ex]
\le\displaystyle m_f\left(\sum_{i=1}^n\sum_{j=1}^m P(X=x_i\,, Y=y_j) (x_i, y_j)\right)\\[3ex]
=\displaystyle m_f\left(\sum_{i=1}^n\sum_{j=1}^m P(X=x_i\,, Y=y_j) x_i, \sum_{j=1}^m\sum_{i=1}^n P(X=x_i\,, Y=y_j) y_j\right),
\end{array}
\]
where we interchanged the summations in the second argument of $ m_f. $ Since the sums of the joint probabilities
\[
\sum_{j=1}^m P(X=x_i\,, Y=y_j)=p_i\quad\text{and}\quad\sum_{i=1}^n P(X=x_i\,, Y=y_j)=q_j
\]
we obtain
\[
\mean\bigl(m_f(X,Y)\bigr)\le m_f\left(\sum_{i=1}^n p_i x_i\,, \sum_{j=1}^m q_j y_j\right)= m_f\bigl(\mean(X),\mean(Y)\bigr),
\]
which is the desired inequality (\ref{Conj1}). 
\end{proof}

\section{Non-commutative perspective} \label{noncommutativeperspective}

For the basic results of this section we refer to \cite{Effros:2009, ENG:2011, EffrosHansen:2014}.
Let $f$ be a function defined in the open positive half-line. In Section \ref{commutativeperspective} we recalled
the perspective of $ f $ as the function of two variables $\mathcal P_f(t,s) = s f(s^{-1}t),$  where  $t,s > 0.$
Depending on the application, we may also consider the function $(t,s)\to\mathcal P_f (s, t)$ and denote this as the perspective of $f$.

If $A$ and $B$ are commuting positive definite matrices, then the matrix ${\mathcal P}_f (A, B)$ is well-defined by the functional calculus, and it coincides with $Bf(B^{-1}A).$ However,
even if $A$ and $B$ do not commute one may, by choosing an appropriate ordering, define the perspective.

\begin{definition} Let $ f $ be a function defined in the open positive half-line. The (non-commutative) perspective $ \mathcal P_f $ of $ f $ is then defined by setting
\[
{\mathcal P}_f(A,B)=A^{1/2}f(A^{-1/2} B A^{-1/2})A^{1/2}
\]
for positive definite operators $ A $ and $ B. $
\end{definition}

For the following basic result confer \cite[Theorem 2.2]{Effros:2009},  
\cite[Theorem 1.1]{EffrosHansen:2014} and 
\cite[Theorem 2.2]{ENG:2011}.

\begin{theorem}\label{BasicJointConvexityNC}
The (non-commutative) perspective ${\mathcal P}_f $ is convex if and only if $f$ is operator convex.
\end{theorem}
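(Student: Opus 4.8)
The plan is to establish the two implications separately, with the ``only if'' part being immediate and the ``if'' part carrying the real content. For necessity, suppose $\mathcal P_f$ is jointly convex. Restricting to the affine slice $A=I$, observe that the convex combination $\lambda(I,B_1)+(1-\lambda)(I,B_2)=(I,\lambda B_1+(1-\lambda)B_2)$ keeps the first argument equal to $I$, while $\mathcal P_f(I,B)=f(B)$. Hence joint convexity forces $f(\lambda B_1+(1-\lambda)B_2)\le\lambda f(B_1)+(1-\lambda)f(B_2)$ for all positive definite $B_1,B_2$, which is precisely operator convexity of $f$.

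For sufficiency, I would reduce joint convexity of $\mathcal P_f$ to the Hansen--Pedersen operator Jensen inequality, which asserts that a continuous $f$ is operator convex precisely when $f(\sum_i V_i^*X_iV_i)\le\sum_i V_i^* f(X_i)V_i$ for all self-adjoint $X_i$ with spectra in the domain of $f$ and all $V_i$ with $\sum_i V_i^*V_i=I$. It suffices to verify the defining two-term inequality $\mathcal P_f(A,B)\le\lambda\,\mathcal P_f(A_1,B_1)+(1-\lambda)\,\mathcal P_f(A_2,B_2)$ for $A=\lambda A_1+(1-\lambda)A_2$ and $B=\lambda B_1+(1-\lambda)B_2$.

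The key step is the choice of contractions. Put $V_1=\sqrt{\lambda}\,A_1^{1/2}A^{-1/2}$ and $V_2=\sqrt{1-\lambda}\,A_2^{1/2}A^{-1/2}$, and let $X_i=A_i^{-1/2}B_iA_i^{-1/2}$ be the argument of $f$ appearing in $\mathcal P_f(A_i,B_i)$. A direct computation gives $V_1^*V_1+V_2^*V_2=A^{-1/2}(\lambda A_1+(1-\lambda)A_2)A^{-1/2}=I$, so the $V_i$ form an isometric column; and $V_i^*X_iV_i$ collapses to $\lambda A^{-1/2}B_1A^{-1/2}$ and $(1-\lambda)A^{-1/2}B_2A^{-1/2}$ respectively, whence $\sum_i V_i^*X_iV_i=A^{-1/2}BA^{-1/2}=:X$. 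Applying operator Jensen to $X$ and conjugating both sides by $A^{1/2}$ yields $\mathcal P_f(A,B)=A^{1/2}f(X)A^{1/2}\le\sum_i A^{1/2}V_i^*f(X_i)V_iA^{1/2}$. Since $V_iA^{1/2}=\sqrt{w_i}\,A_i^{1/2}$ with $w_1=\lambda$ and $w_2=1-\lambda$, each summand is exactly $w_i\,A_i^{1/2}f(X_i)A_i^{1/2}=w_i\,\mathcal P_f(A_i,B_i)$, giving the desired inequality.

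The main obstacle is discovering the contractions $V_i=\sqrt{w_i}\,A_i^{1/2}A^{-1/2}$: they must simultaneously satisfy the normalization $\sum_i V_i^*V_i=I$ demanded by operator Jensen and transform the individual arguments $X_i$ so that their weighted combination reproduces $A^{-1/2}BA^{-1/2}$. Once this choice is in place the verifications are purely routine. Two minor points remain to be recorded: one should note that each $X_i$ has spectrum in the positive half-line, so $f$ is evaluated only where it is defined; and one must invoke the exact form of operator Jensen with $\sum_i V_i^*V_i=I$ (rather than the $\le I$ version), so that no boundary hypothesis such as $f(0)\le 0$ is needed.
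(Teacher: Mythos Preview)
Your argument is correct in both directions: the ``only if'' restriction to $A=I$ is the standard observation, and the ``if'' direction via the Hansen--Pedersen Jensen inequality with the contractions $V_i=\sqrt{w_i}\,A_i^{1/2}A^{-1/2}$ is exactly right---the verifications $\sum_i V_i^*V_i=I$ and $\sum_i V_i^*X_iV_i=A^{-1/2}BA^{-1/2}$ all check out.

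Note, however, that the paper does not supply its own proof of this theorem; it merely states the result and refers the reader to Effros~\cite{Effros:2009}, Effros--Hansen~\cite{EffrosHansen:2014}, and Ebadian--Nikoufar--Gordji~\cite{ENG:2011}. Your write-up is essentially the argument that appears in those sources (in particular the Effros and ENG papers), so there is no genuine difference in approach to report.
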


Let $ f\colon(0,\infty)\to\mathbf R $ be a convex function. Since the perspective
$  \mathcal P_f $ is both convex and positively homogenous we obtain the inequality
\[
\mathcal P_f\Big(\sum_{i=1}^n \lambda_i x_i,\sum_{i=1}^n \lambda_i y_i\Bigr)
\le\sum_{i=1}^n \lambda_i \mathcal P_f(x_i,y_i)
\]
for tuples $ (x_1,\dots,x_n) $ and $ (y_1,\dots,y_n) $ of positive numbers and
positive numbers $ \lambda_1,\dots,\lambda_n. $ This entails, by setting all $ \lambda_i=1, $ the inequality
\[
\mathcal P_f(\tr A,\tr B)\le \tr \mathcal P_f(A,B)
\]
for commuting positive definite matrices $ A $ and $ B. $

The transformer inequality  for the non-commutative perspective of an operator convex function is essentially proved in \cite[Theorem 2.2]{Hansen:1983}. Since the perspective of an operator convex function is a convex regular operator map the statement also follows from \cite[Lemma 2.1]{kn:hansen:2016:1}.

\begin{proposition}[the transformer inequality]
Let $ f\colon(0,\infty)\to\mathbb R $ be an operator convex function. The non-commutative perspective $ \mathcal P_f $ satisfies the inequality
\[
\mathcal P_f(C^*AC, C^*BC)\le C^*\mathcal P_f(A,B) C
\]
for every contraction $ C $ and positive definite operators $ A $ and $ B. $
\end{proposition}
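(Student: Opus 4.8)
The plan is to derive the transformer inequality from only two ingredients: the unitary covariance of the perspective and its joint convexity (Theorem~\ref{BasicJointConvexityNC}). These are bridged to the general contraction case by a dilation. Concretely, I would first record that for a unitary $U$ one has the covariance identity $\mathcal P_f(U^*AU,U^*BU)=U^*\mathcal P_f(A,B)U$; then use joint convexity to extract a corner (compression) inequality; and finally obtain the contraction case by embedding $C$ as the upper-left corner of a unitary acting on $\mathcal H\oplus\mathcal H$.

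For the covariance identity, since $U$ is unitary we have $(U^*AU)^{\pm 1/2}=U^*A^{\pm1/2}U$ and $f(U^*XU)=U^*f(X)U$, so that
\[
\mathcal P_f(U^*AU,U^*BU)=U^*A^{1/2}f(A^{-1/2}BA^{-1/2})A^{1/2}U=U^*\mathcal P_f(A,B)U .
\]
For the corner inequality, let $M,N$ be positive definite on $\mathcal H\oplus\mathcal H$ and let $J=\mathrm{diag}(I,-I)$, a selfadjoint unitary satisfying $\tfrac12(T+JTJ)=\mathrm{diag}(T_{11},T_{22})$ for every block operator $T$. Applying joint convexity together with the covariance identity for $J$ gives
\[
\mathcal P_f\Big(\tfrac{M+JMJ}{2},\tfrac{N+JNJ}{2}\Big)\le\tfrac12\big(\mathcal P_f(M,N)+J\mathcal P_f(M,N)J\big),
\]
and reading off the $(1,1)$ blocks of both sides yields the compression inequality $\mathcal P_f(M_{11},N_{11})\le\big(\mathcal P_f(M,N)\big)_{11}$.

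Finally, given a contraction $C$ I would take the Halmos dilation $U=\begin{pmatrix}C & \ast\\ (I-C^*C)^{1/2} & \ast\end{pmatrix}$, a unitary on $\mathcal H\oplus\mathcal H$ with upper-left corner $C$, and apply the covariance identity to the block-diagonal operators $\tilde A=A\oplus\epsilon I$ and $\tilde B=B\oplus\epsilon I$, where $\epsilon>0$ keeps everything positive definite. A direct computation of the $(1,1)$ blocks gives $(U^*\tilde A U)_{11}=C^*AC+\epsilon(I-C^*C)$, and likewise for $B$, while $(U^*\mathcal P_f(\tilde A,\tilde B)U)_{11}=C^*\mathcal P_f(A,B)C+\epsilon f(1)(I-C^*C)$ because $\mathcal P_f(\epsilon I,\epsilon I)=\epsilon f(1)I$. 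The corner inequality then reads
\[
\mathcal P_f\big(C^*AC+\epsilon(I-C^*C),\,C^*BC+\epsilon(I-C^*C)\big)\le C^*\mathcal P_f(A,B)C+\epsilon f(1)(I-C^*C),
\]
and letting $\epsilon\to0^+$ and invoking continuity of $\mathcal P_f$ gives the claim. The main obstacle I expect is precisely this limiting step: it requires $C^*AC$ and $C^*BC$ to lie in the domain of $\mathcal P_f$, so some care with the regularization—or with the standing assumption that the compressed operators remain positive definite—is needed in order to pass to the limit cleanly. Everything else is either algebra or an appeal to the joint convexity already established in Theorem~\ref{BasicJointConvexityNC}.
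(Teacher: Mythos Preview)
Your argument is correct. The unitary covariance, the $2\times 2$ averaging trick with $J=\mathrm{diag}(I,-I)$ to extract the compression inequality, and the Halmos dilation step all work exactly as you describe, and the only delicate point---the passage to the limit $\epsilon\to 0^+$ when $C^*AC$ or $C^*BC$ may fail to be invertible---you have already flagged. Since the perspective in the paper is only defined for positive definite arguments, the inequality as stated should be read either under the standing hypothesis that $C^*AC,\,C^*BC$ are positive definite (automatic when $C$ is injective with closed range), or else with $\mathcal P_f(C^*AC,C^*BC)$ interpreted as the decreasing limit of $\mathcal P_f(C^*AC+\epsilon I,\,C^*BC+\epsilon I)$; either way your argument goes through.

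As for comparison: the paper does not actually supply a proof of this proposition. It merely records that the result is ``essentially proved'' in \cite[Theorem~2.2]{Hansen:1983} and that it also follows from \cite[Lemma~2.1]{kn:hansen:2016:1}, where the perspective is treated as a convex regular operator map. Your dilation-plus-convexity argument is precisely the classical route behind those references (the $2\times 2$ trick is the standard device in the Hansen--Pedersen circle of ideas), so in spirit you are reproducing the cited proofs rather than giving a different one. What your write-up buys over the paper's bare citation is a self-contained derivation from Theorem~\ref{BasicJointConvexityNC} alone, which is pedagogically cleaner here since that theorem is already stated in the text.
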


 Notice that we by homogeneity obtain
 \[
\mathcal P_f(C^*AC, C^* BC)\le  C^* \mathcal P_f(A,B) C
 \]
 for any operator $ C. $ In particular, if $ C $ is invertible we then have
 \[
 \mathcal P_f(A,B)\le (C^*)^{-1} \mathcal P_f(C^*AC, C^* BC) C^{-1}\le \mathcal P_f(A,B),
 \]
 hence there is equality and thus
 \begin{equation}
 C^* \mathcal P_f(A,B) C= \mathcal P_f(C^*AC, C^* BC).
 \end{equation}

\begin{proposition}
Let $ \mathcal P_f $ be the non-commutative perspective of an operator convex function $ f\colon (0,\infty)\to\mathbb R $ and let $ c_1,\dots,c_n $ be operators on a Hilbert space $ \mathcal H $ such that
$ c_1^*c_1+\cdots+c^*_n c_n= 1. $ Then
\[
\mathcal P_f\Bigl(\sum_{i=1}^n c_i^* A_i c_i\,, \sum_{i=1}^n c_i^* B_i c_i\Bigr)\le
\sum_{i=1}^n c_i^* \mathcal P_f(A_i,B_i) c_i
\]
for positive definite operators $ A_1,\dots,A_n $ and $ B_1,\dots, B_n $ acting on $ \mathcal H. $ 
\end{proposition}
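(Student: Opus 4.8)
The plan is to reduce this $n$-term statement to the single-operator transformer inequality (the preceding Proposition) by amplifying to a direct sum Hilbert space. First I would form $\mathcal K=\mathcal H\oplus\cdots\oplus\mathcal H$ ($n$ copies) and assemble the block-diagonal operators $A=A_1\oplus\cdots\oplus A_n$ and $B=B_1\oplus\cdots\oplus B_n$ acting on $\mathcal K.$ Since each $A_i,B_i$ is positive definite, so are $A$ and $B.$ Because the functional calculus respects direct sums, one reads off directly from the definition $\mathcal P_f(A,B)=A^{1/2}f(A^{-1/2}BA^{-1/2})A^{1/2}$ that $\mathcal P_f(A,B)=\mathcal P_f(A_1,B_1)\oplus\cdots\oplus\mathcal P_f(A_n,B_n).$ Recording this block-diagonal identity is the one computation I would carry out explicitly.

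Next I would introduce the column operator $C\colon\mathcal H\to\mathcal K$ defined by $C\xi=(c_1\xi,\dots,c_n\xi),$ whose adjoint is $C^*(\eta_1,\dots,\eta_n)=\sum_{i=1}^n c_i^*\eta_i.$ The hypothesis $c_1^*c_1+\cdots+c_n^*c_n=1$ says exactly that $C^*C=1,$ so $C$ is an isometry and in particular a contraction. A short computation then gives $C^*AC=\sum_{i=1}^n c_i^*A_ic_i$ and $C^*BC=\sum_{i=1}^n c_i^*B_ic_i,$ while the block-diagonal form above yields $C^*\mathcal P_f(A,B)C=\sum_{i=1}^n c_i^*\mathcal P_f(A_i,B_i)c_i.$

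With these identifications in place the conclusion is immediate: applying the transformer inequality to the contraction $C$ and the positive definite operators $A,B$ gives $\mathcal P_f(C^*AC,C^*BC)\le C^*\mathcal P_f(A,B)C,$ which unfolds precisely into the asserted inequality.

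The step I expect to require the most care is the invocation of the transformer inequality when $C$ is a contraction between \emph{different} Hilbert spaces $\mathcal H$ and $\mathcal K,$ whereas the Proposition was phrased on a single space. This is not a genuine obstruction: the transformer inequality for operator convex functions holds verbatim for contractions $C\colon\mathcal H\to\mathcal K,$ which is the natural generality of \cite[Theorem 2.2]{Hansen:1983} and \cite[Lemma 2.1]{kn:hansen:2016:1}. Alternatively one may embed $\mathcal H$ isometrically into $\mathcal K$ so as to keep everything on $\mathcal K.$ Either way the only real content is the amplification bookkeeping, which becomes routine once the block-diagonal identity for $\mathcal P_f$ has been established.
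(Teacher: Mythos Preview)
Your argument is correct. The amplification to $\mathcal K=\mathcal H^{\oplus n}$, the block-diagonal identity $\mathcal P_f(A_1\oplus\cdots\oplus A_n,\,B_1\oplus\cdots\oplus B_n)=\bigoplus_i\mathcal P_f(A_i,B_i)$, and the identification of the column isometry $C$ with $C^*C=1$ are all clean, and the transformer inequality then gives the result immediately. Your remark about $C$ acting between different spaces is well placed; the cited sources for the transformer inequality indeed cover that generality, and the alternative of embedding $\mathcal H$ into $\mathcal K$ works equally well. One small point you do not spell out but which is needed is that $\sum_i c_i^*A_ic_i$ and $\sum_i c_i^*B_ic_i$ are themselves positive definite, so that $\mathcal P_f$ is defined on them; this follows from $\sum_i c_i^*c_i=1$ together with uniform lower bounds on the $A_i,B_i$.

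The paper, however, does not argue this way. Its proof is a two-line appeal to external machinery: it observes that $\mathcal P_f$ is a \emph{convex regular operator map} in the sense of \cite{Hansen:1983,EffrosHansen:2014,kn:hansen:2016:1} and then invokes the general Jensen inequality for such maps, \cite[Theorem~2.2]{kn:hansen:2016:1}, which directly yields the stated bound. Your route is more self-contained within the present paper, needing only the preceding transformer-inequality proposition plus a standard amplification; the paper's route is shorter on the page but off-loads the work to the cited Jensen theorem (whose proof, incidentally, is built on essentially the same amplification idea you carry out explicitly).
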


\begin{proof}
The perspective $ \mathcal P_f $ is a convex regular operator map of two variables 
\cite{Hansen:1983, EffrosHansen:2014, kn:hansen:2016:1}. The statement thus follows from Jensen's inequality for convex regular operator maps  \cite[Theorem 2.2]{kn:hansen:2016:1}.
\end{proof}

\section{Operator means in the sense of Kubo-Ando} \label{KuboAndomeans}

The celebrated Kubo-Ando theory of matrix means 
\cite{KuboAndo79/80, PetzTemesi:2005,GibiliscoHansenIsola:2009} may today be considered as part of the theory of perpectives of positive operator concave functions. This setting is simpler than the general theory of perspectives since a positive operator concave function necessarily is increasing, while a positive operator convex function may not necessarily be monotonic.  

\begin{definition}
A bivariate {\sl mean} for pairs of positive operators is a function
\[
(A,B)\to m(A,B)
\]
defined in and with values in positive definite operators on a Hilbert space and satisfying, mutatis mutandis, conditions $(1)$ to $(5)$ in Definition~\ref{numbermean}. In addition the
{\sl transformer inequality}
\[
C^*m(A,B)C \leq m(C^*AC,C^*BC)
\]
holds for positive definite $ A, B $ and arbitrary $ C. $ 
\end{definition}

Notice that the transformer inequality replaces $ (6) $ in Definition~\ref{numbermean}.
We denote by $\displaystyle {\mathcal M}_{op}$ the set of matrix means.

\begin{example}
The  arithmetic, geometric and harmonic (matrix) means are defined, respectively, by setting
\[
\begin{array}{rcl}
A \nabla B&=&\frac{1}{2}(A+B)\\[1.5ex]
A\# B&=&A^{1/2}\bigl(A^{-1/2} B A^{-1/2}\bigr)^{1/2}A^{1/2}\\[1.5ex]
A{\rm !}B&=&2(A^{-1}+B^{-1})^{-1}. 
\end{array}
\]
\end{example}

We recall that a function $f\colon(0,\infty)\to \mathbb{R}$ is said to be 
{\it operator monotone (increasing)} if
\[
A\le B\quad\Rightarrow\quad f(A)\le f(B)
\]
for positive definite operators on an arbitrary Hilbert space. An operator monotone function $ f $ is said to be {\it symmetric} if
$f(t)=tf(t^{-1})$ for $ t>0 $ and {\it normalized} if $f(1)=1.$

\begin{definition}

${\mathcal F}_{op}$ is the class of functions $f: {\mathbb R}_+
\to{\mathbb R}_+$ such that
\begin{enumerate}

\item $f$ is operator monotone increasing,

\item $tf(t^{-1})=f(t)\qquad t>0,$

\item $f(1)=1.$

\end{enumerate}
\end{definition}

The fundamental result, due to Kubo and Ando, is the following.

\begin{theorem}
There is bijection between ${\mathcal M}_{op}$ and ${\mathcal F}_{op}$ given by
the formula
\[
m_f(A,B)= A^{1/2}f(A^{-1/2} BA^{-1/2})A^{1/2}.
\]
\end{theorem}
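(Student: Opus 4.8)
The plan is to exhibit the two maps $f\mapsto m_f$ and $m\mapsto f_m$ and show they are mutually inverse. Here $m_f(A,B)=A^{1/2}f(A^{-1/2}BA^{-1/2})A^{1/2}$ is the non-commutative perspective $\mathcal P_f$ of $f$ (with arguments in the stated order), and $f_m$ is the scalar function $f_m(t)=m(1,t)$ obtained by restricting the operator mean to commuting scalars. I would first verify that each map lands in the correct class, and then establish the two identities $f_{m_f}=f$ and $m_{f_m}=m$.

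First I would treat the direction $f\mapsto m_f$. Since $f\in\mathcal F_{op}$ is operator monotone on the positive half-line, it is operator concave by the implication recalled in the introduction. Applying the concave analogue of Theorem~\ref{BasicJointConvexityNC} (obtained by replacing $f$ with $-f$), the perspective $\mathcal P_f=m_f$ is operator concave, and the concave analogue of the transformer inequality gives $C^*m_f(A,B)C\le m_f(C^*AC,C^*BC)$. It then remains to check the mean axioms: idempotency $m_f(A,A)=A$ from $f(1)=1$; symmetry $m_f(A,B)=m_f(B,A)$ from $tf(t^{-1})=f(t)$ via the transpose identity $\mathcal P_f(A,B)=\mathcal P_{f^*}(B,A)$ with $f^*(t)=tf(t^{-1})$ (so $f^*=f$ when $f$ is symmetric); joint monotonicity from operator monotonicity of $f$, which together with idempotency also yields the betweenness property $(3)$; and continuity from continuity of the functional calculus. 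Thus $m_f\in\mathcal M_{op}$, and evaluating at $A=I$ gives $f_{m_f}(t)=m_f(1,t)=f(t)$, so $f_{m_f}=f$.

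For the direction $m\mapsto f_m$, normalization $f_m(1)=m(1,1)=1$ is immediate. As in the computation following the transformer inequality, the axiom $C^*m(A,B)C\le m(C^*AC,C^*BC)$ forces equality $C^*m(A,B)C=m(C^*AC,C^*BC)$ for every invertible $C$; with $C=\sqrt t\,I$ this yields homogeneity $m(tA,tB)=t\,m(A,B)$, whence $f_m(t)=m(1,t)=m(t,1)=t\,m(1,t^{-1})=tf_m(t^{-1})$, so $f_m$ is symmetric. Taking instead $C=A^{-1/2}$ gives $m(I,A^{-1/2}BA^{-1/2})=A^{-1/2}m(A,B)A^{-1/2}$, that is
\[
m(A,B)=A^{1/2}\,m\bigl(I,A^{-1/2}BA^{-1/2}\bigr)\,A^{1/2}.
\]
If I can show that $m(I,X)=f_m(X)$ by the functional calculus for every positive definite $X$, then the right-hand side is exactly $m_{f_m}(A,B)$, so $m=m_{f_m}$; moreover operator monotonicity of $f_m$ then comes for free, since axiom $(4)$ gives $X\le X'\Rightarrow f_m(X)=m(I,X)\le m(I,X')=f_m(X')$ on every Hilbert space, which is precisely operator monotonicity.

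The main obstacle is therefore the identity $m(I,X)=f_m(X)$, i.e.\ that the mean acts as the scalar functional calculus on commuting pairs. I would establish this by applying the transformer inequality to the spectral projections of $X$ (working on the associated subspaces, with a limiting argument to stay among positive definite operators) to show that $m(I,X)$ commutes with $X$ and, on each spectral block, acts as multiplication by the scalar mean $m(1,\lambda)=f_m(\lambda)$; continuity then extends the conclusion from finite spectral resolutions to arbitrary positive definite $X$. This reduction to the scalar mean on commuting operators is the technical heart of the Kubo-Ando theorem, and once it is in place every other assertion follows cleanly from the perspective formalism developed above.
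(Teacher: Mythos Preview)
The paper does not prove this statement: it is recorded as ``the fundamental result, due to Kubo and Ando'' with a citation to \cite{KuboAndo79/80}, and no proof is supplied. There is therefore no proof in the paper to compare your proposal against.

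For what it is worth, your outline follows the standard architecture of the Kubo--Ando argument: the map $f\mapsto m_f$ is handled via the perspective machinery (concavity, transformer inequality, symmetry from $f^*=f$), and the map $m\mapsto f_m$ is recovered by exploiting the transformer equality for invertible $C$ to reduce $m(A,B)$ to $m(I,A^{-1/2}BA^{-1/2})$. You correctly isolate the crux as the identity $m(I,X)=f_m(X)$, and your idea of using spectral projections of $X$ together with the transformer inequality to force $m(I,X)$ to be block-diagonal with scalar values $m(1,\lambda)$ is essentially how Kubo and Ando proceed (they work with orthogonal projections and continuity/limit arguments). One caveat: your remark that ``joint monotonicity follows from operator monotonicity of $f$'' deserves a word of care---monotonicity in $B$ is immediate from operator monotonicity of $f$, while monotonicity in $A$ then follows from the symmetry $m_f(A,B)=m_f(B,A)$, not directly from $f$ being operator monotone.
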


\begin{remark}
All the function in ${\mathcal F}_{op}$ are (operator) concave making the operator case quite different from the numerical one.
\end{remark}

If $\rho$ is a density matrix and $A$ is self-adjoint then the expectation of $ A $ in the state $ \rho $ is defined by setting $ \mean_{\rho}(A)= \tr(\rho A). $

\section{The main result: noncommutative case}\label{noncommutativemain}

\begin{theorem}\label{non-commutative Rao inequality}
Take $ f\in {\mathcal F}_{op}. $ Then
\begin{equation} \label{Conj2}
\mathbb {\mean}_\rho(m_f(A,B)) \leq m_f({\mean}_\rho(A), {\mean }_\rho(B)),
\end{equation}

\end{theorem}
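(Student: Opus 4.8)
The plan is to reduce the statement to the commutative Theorem of Section~\ref{commutativemain} by diagonalising the state and exploiting the transformer inequality that is built into the definition of a Kubo--Ando mean. The crucial structural input is the Remark that every $ f\in\mathcal F_{op} $ is operator concave; consequently $ m_f $ is a bona fide operator mean and in particular satisfies $ C^*m_f(A,B)C\le m_f(C^*AC,C^*BC) $ for \emph{arbitrary} $ C. $

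First I would write the spectral decomposition $ \rho=\sum_k\lambda_k\,|e_k\rangle\langle e_k| $ with $ \lambda_k\ge 0, $ $ \sum_k\lambda_k=1 $ and $ \{e_k\} $ orthonormal. For each $ k $ I apply the transformer inequality to the rank-one isometry $ C_k=|e_k\rangle\colon\mathbb C\to\mathcal H. $ Because $ C_k^*XC_k=\langle e_k,Xe_k\rangle $ is a positive scalar whenever $ X $ is positive definite, and because an operator mean restricted to positive scalars reduces to the numerical mean $ m_f $ (the two normalisations agreeing by the symmetry relation $ tf(t^{-1})=f(t) $), this yields the pointwise estimate
\[
\langle e_k,m_f(A,B)e_k\rangle\le m_f\bigl(\langle e_k,Ae_k\rangle,\langle e_k,Be_k\rangle\bigr).
\]

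Next I would set $ a_k=\langle e_k,Ae_k\rangle $ and $ b_k=\langle e_k,Be_k\rangle, $ multiply the estimate by $ \lambda_k $ and sum. Since $ \mean_\rho(m_f(A,B))=\tr(\rho\,m_f(A,B))=\sum_k\lambda_k\langle e_k,m_f(A,B)e_k\rangle, $ this gives $ \mean_\rho(m_f(A,B))\le\sum_k\lambda_k\,m_f(a_k,b_k). $ Viewing $ k\mapsto a_k $ and $ k\mapsto b_k $ as positive random variables on the probability space $ (\{k\},\lambda_k), $ and using that $ f $ is concave, the commutative Theorem of Section~\ref{commutativemain} bounds the right-hand side by $ m_f\bigl(\sum_k\lambda_k a_k,\sum_k\lambda_k b_k\bigr). $ Finally $ \sum_k\lambda_k a_k=\tr(\rho A)=\mean_\rho(A) $ and similarly for $ B, $ which chains everything into (\ref{Conj2}).

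The spectral step and the scalar identification are routine; the one point needing care is the use of the transformer inequality with the non-square isometry $ C_k\colon\mathbb C\to\mathcal H, $ that is, knowing that the Kubo--Ando transformer inequality persists for maps between Hilbert spaces of different dimension --- precisely where operator concavity is doing the work. A slicker one-shot variant, which bypasses the commutative theorem altogether, is to purify $ \rho $ to a unit vector $ \Psi\in\mathcal H\otimes\mathcal K $ with $ \tr(\rho X)=\langle\Psi,(X\otimes I)\Psi\rangle, $ note that $ m_f(A\otimes I,B\otimes I)=m_f(A,B)\otimes I, $ and apply the transformer inequality a single time with $ C=|\Psi\rangle $ to reach (\ref{Conj2}) directly.
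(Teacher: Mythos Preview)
Your proof is correct and follows essentially the same route as the paper: diagonalise $\rho$, apply the transformer inequality along the eigenvectors, and finish with the concavity of the scalar perspective. The only cosmetic difference is that the paper sidesteps your non-square $C_k$ worry by using the square operators $c_i=\lambda_i^{1/2}e_i$ on $\mathcal H$, bundling them via super-additivity into $\sum_i c_i^* m_f(A,B)c_i\le m_f\bigl(\sum_i c_i^*Ac_i,\sum_i c_i^*Bc_i\bigr)$, and then observing that the two compressed operators commute so that the trace can be pulled inside; your purification variant is a pleasant one-shot alternative.
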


\begin{proof} Consider a spectral resolution
\[
\rho=\sum_{i=1}^n \lambda_i e_i
\]
of the density matrix $ \rho $ in terms of one-dimensional orthogonal eigenprojections $ e_1,\dots,e_n $ with corresponding eigenvalues $ \lambda_1,\dots,\lambda_n $ counted with multiplicity. By setting $ c_i=\lambda_i^{1/2} e_i $ for $ i=1,\dots,n $ we obtain
\[
\mean_\rho(A)=\tr\rho A=\tr\sum_{i=1}^n c_i^* A c_i
\]
for any operator $ A. $ By using the transformer inequality we obtain
\[
\begin{array}{rl}
\displaystyle\mean_\rho\bigl(m_f(A,B)\bigr)&=\displaystyle\tr\sum_{i=1}^n c_i^* m_f\bigl(A,B\bigr)c_i\\[3ex]
&\le\displaystyle\tr m_f\Bigl(\sum_{i=1}^n c_i^*Ac_i,\sum_{i=1}^n c_i^* Bc_i\Bigr)\\[3ex]
&\displaystyle\le m_f\Bigl(\tr\sum_{i=1}^n c_i^*Ac_i\,,\tr\sum_{i=1}^n c_i^*Bc_i\Bigr)\\[3.5ex]
&=m_f\bigl(\mean_\rho(A),\mean_\rho(B)\bigr),
\end{array}
\]
where we in the second inequality used that the operators
\[
\sum_{i=1}^n c_i^*Ac_i\quad\text{and}\quad \sum_{i=1}^n c_i^*Bc_i
\]
are commuting.
\end{proof}

\section{The random matrix case}\label{randommatrix}

Let $(\Omega, {\mathcal F}, P)$ be a probability space. A map $X\colon\Omega \to M_n$ is called a random matrix. We may write
\[
 X =\big(X_{i,j}\bigr)_{i,j=1}^n \colon\Omega\to M_n
\] 
and say that $ X $ is a positive definite random matrix if
\[
X(\omega) =\big(X_{i,j}(\omega)\bigr)_{i,j=1}^n
\] 
is positive definite for $ P $-almost all $ \omega\in\Omega. $  We may readily consider other types of definiteness for random matrices.

\begin{definition}
A positive semi-definite random matrix $ \rho\colon\Omega\to M_n $ is called a random density matrix if $ \tr\rho=1 $ for $ P $-almost all $ \omega\in\Omega. $
\end{definition}

Let $ X $ and $ \rho $ be random matrices on the probability space $(\Omega, {\mathcal F}, P)$ and suppose that $ \rho $ is a random density matrix.  We introduce the pointwise expectation $ {\mean}_\rho(X) $ by setting
\[
({\mean}_\rho X)(\omega)=\tr \rho(\omega)X(\omega)\qquad\omega\in\Omega.
\]
The pointwise expectation $ {\mean}_\rho(X) $ is a random variable with mean
\[
{\mean} \bigl({\mean}_\rho(X)\bigr)=\int_\Omega \tr\rho(\omega)X(\omega)\,dP(\omega).
\]
If $ \rho $ is a constant density matrix then
\[
{\mean}\bigl({\mean}_\rho(X)\bigr)=\tr\rho\int_\Omega X(\omega)\,dP(\omega)
=\tr\rho{\mean}(X)={\mean}_\rho\bigl({\mean}(X)\bigr),
\]
where $ {\mean}(X) $ is the constant matrix with entries
\[
{\mean}(X)_{i,j}=\int_\Omega X_{i,j}(\omega)\,dP(\omega)\qquad i,j=1,\dots,n.
\]

\begin{theorem}\label{Rao inequality for random matrices}
Let  $ X $ and $ Y $ be positive definite random matrices on a probability space $(\Omega, {\mathcal F}, P).$ For $ f\in {\mathcal F}_{op} $ we obtain the inequality
\[
{\mean}\, {\mean}_\rho(m_f(X,Y)) \leq m_f({\mean} \,{\mean}_\rho(X), {\mean}\, {\mean}_\rho(Y))
\]
for each random density matrix $ \rho $ on $(\Omega, {\mathcal F}, P).$
\end{theorem}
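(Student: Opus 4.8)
The plan is to obtain the result by chaining the two expectation inequalities already established, exploiting that the full expectation $\mean\,\mean_\rho$ factors as the pointwise quantum expectation $\mean_\rho$ (a trace against $\rho(\omega)$, performed separately for each $\omega$) followed by the classical expectation $\mean$ (integration over $\Omega$). Fix $f\in\mathcal F_{op}$. Before anything else I would record that this $f$ also lies in $\mathcal F_{num}$ and is concave: operator monotonicity forces ordinary monotonicity and continuity, the normalization $f(1)=1$ and the symmetry $tf(t^{-1})=f(t)$ are common to both classes, and—as noted in the remark of Section~\ref{KuboAndomeans}—every function in $\mathcal F_{op}$ is operator concave, hence concave in the ordinary sense. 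Consequently both the commutative main result of Section~\ref{commutativemain} and the non-commutative inequality of Theorem~\ref{non-commutative Rao inequality} are available for this single $f$.

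Next I would apply Theorem~\ref{non-commutative Rao inequality} pointwise in $\omega$. For $P$-almost every $\omega\in\Omega$ the matrix $\rho(\omega)$ is a density matrix and $X(\omega),Y(\omega)$ are positive definite, so that theorem gives
\[
\tr\bigl(\rho(\omega)\,m_f(X(\omega),Y(\omega))\bigr)\le m_f\bigl(\tr(\rho(\omega)X(\omega)),\,\tr(\rho(\omega)Y(\omega))\bigr),
\]
where the mean on the right is now the numerical mean of two positive scalars. In the notation of this section this is exactly the pointwise inequality
\[
\mean_\rho\bigl(m_f(X,Y)\bigr)(\omega)\le m_f\bigl(\mean_\rho(X)(\omega),\,\mean_\rho(Y)(\omega)\bigr)
\]
between scalar random variables. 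Integrating over $\Omega$ and using monotonicity of the integral yields
\[
\mean\,\mean_\rho\bigl(m_f(X,Y)\bigr)\le \mean\Bigl(m_f\bigl(\mean_\rho(X),\,\mean_\rho(Y)\bigr)\Bigr).
\]
I would then invoke the commutative main result for the two positive random variables $U=\mean_\rho(X)$ and $V=\mean_\rho(Y)$; since $f$ is concave it delivers
\[
\mean\bigl(m_f(U,V)\bigr)\le m_f\bigl(\mean(U),\mean(V)\bigr)=m_f\bigl(\mean\,\mean_\rho(X),\,\mean\,\mean_\rho(Y)\bigr),
\]
and concatenating the last two displays produces the asserted inequality.

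Each individual step is routine, so the main points needing attention are bookkeeping rather than a genuine obstacle. I would verify that $U$ and $V$ are bona fide positive random variables: each value $\mean_\rho(X)(\omega)=\tr(\rho(\omega)X(\omega))$ is measurable, being a finite sum of products of the measurable entries of $\rho$ and $X$, and it is strictly positive because $\rho(\omega)\ge 0$ with $\tr\rho(\omega)=1$ together with $X(\omega)>0$ force $\tr(\rho(\omega)X(\omega))>0$. One must also keep the two roles of the symbol $m_f$ carefully distinct—it denotes the Kubo--Ando operator mean inside the first pointwise step and the numerical mean of positive scalars thereafter—and confirm that the almost-everywhere validity of the pointwise inequality survives integration, which it does since a null exceptional set does not affect the integral.
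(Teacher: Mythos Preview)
Your proof is correct and follows essentially the same two-step strategy as the paper: apply Theorem~\ref{non-commutative Rao inequality} pointwise in $\omega$, integrate, and then apply the commutative main theorem to the scalar random variables $\mean_\rho(X)$ and $\mean_\rho(Y)$. Your version is somewhat more careful in spelling out why $f\in\mathcal F_{op}$ also lies in $\mathcal F_{num}$ and is concave, and in checking the measurability and positivity of $U,V$, but the underlying argument is the same.
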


\begin{proof}
The matrices $ X(\omega), $ $ Y(\omega) $ and $ \rho(\omega) $ are  positive definite and $ \rho(\omega) $ has unit trace for almost all $ \omega\in\Omega $. The inequality between random variables 
\[
{\mean}_{\rho(\omega)}\bigl(m_f(X(\omega),Y(\omega))\bigr)
\le
m_f\bigl({\mean}_{\rho(\omega)}(X(\omega)), {\mean}_{\rho(\omega)}(Y(\omega))
\]
is therefore valid by our non-commutative  
inequality in Theorem~\ref{non-commutative Rao inequality}. In particular, by taking the mean on both sides, we obtain
\[
\begin{array}{rl}
{\mean} \, {\mean}_{\rho}\bigl(m_f(X,Y)\bigr)
&\le
{\mean}\bigl(m_f\bigl({\mean}_{\rho}(X), {\mean}_{\rho}(Y)\bigr)\\[1ex]
&\le m_f\bigl({\mean}{\mean}_\rho(X), {\mean} {\mean}_\rho(Y)\bigr),
\end{array}
\]
where we used, in the last inequality, the commutative 
inequality in Theorem \ref{Conj1}.
\end{proof}

Notice that Theorem~\ref{Rao inequality for random matrices} reduces to the non-commutative 
inequality when $ \Omega $ is a one point space, and to the commutative 
inequality when $ n=1. $ If $ \rho $ is a constant matrix then the order of ${\mean}$ and $ {\mean}_\rho $ in the inequality may be reversed.

{\bf Acknowledgments.}  It is a pleasure for the first author to thank Fumio Hiai for discussions and hints on the subject.
The second author acknowledges support from the Japanese government Grant-in-Aid for scientific research 26400104.

\bibliographystyle{amsplain}

\end{document}